\title{Topological mirror symmetry for rank two character varieties of surface groups}
\author{Mirko Mauri}
\newcommand{\CC}{\mathbb{C}}
\newcommand{\ZZ}{\mathbb{Z}}
\newcommand{\Gr}{\operatorname{Gr}}
\newcommand{\Gl}{\operatorname{GL}}
\newcommand{\Sl}{\operatorname{SL}}
\newcommand{\PGl}{\operatorname{PGL}}
\newcommand{\MB}{M_{\mathrm{B}}}
\newcommand{\MDR}{M_{\mathrm{DR}}}
\newcommand{\MDol}{{M}_{\mathrm{Dol}}}
\newcommand{\Hom}{\operatorname{Hom}}
\newcommand{\codim}{\operatorname{codim}}
\newcommand{\Pic}{\operatorname{Pic}}
\theoremstyle:=definition,remark,plain\do{%
\expandafter\g@addto@macro\csname th@\theoremstyle\endcsname{%
\addtolength\thm@preskip\parskip
}%
}
\newtheorem{thm}{Theorem}[section]
\newtheorem{conj}[thm]{Conjecture}
\theoremstyle{definition}
\newtheorem{rmk}[thm]{Remark}
\crefname{thm}{Theorem}{Theorems}
\Crefname{thm}{Theorem}{Theorems}
\Crefname{thm}{Theorem}{Theorems}
\Crefname{thm}{Theorem}{Theorems}
\crefname{lem}{Lemma}{Lemmas}
\Crefname{lem}{Lemma}{Lemmas}
\crefname{Conjecture}{Conjecture}{Conjectures}
\Crefname{Conjecture}{Conjecture}{Conjectures}
\crefname{Corollary}{Corollary}{Corollaries}
\Crefname{Corollary}{Corollary}{Corollaries}
\crefname{Claim}{Claim}{Claims}
\Crefname{Claim}{Claim}{Claims}
\crefname{Proposition}{Proposition}{Propositions}
\Crefname{Proposition}{Proposition}{Propositions}
\crefname{Remark}{Remark}{Remarks}
\Crefname{Remark}{Remark}{Remarks}
\crefname{Definition}{Definition}{Definitions}
\Crefname{Definition}{Definition}{Definitions}
\crefname{Example}{Example}{Examples}
\Crefname{Example}{Example}{Examples}
\crefname{Exercise}{Exercise}{Exercises}
\Crefname{Exercise}{Exercise}{Exercises}
\newtheoremstyle{plain2}    % to uniformize styles with the paragraphs
   {}            % ABOVESPACE (empty value is the same as default value)
   {}            % BELOWSPACE (empty value is the same as degfault value)
   {\itshape}    % BODYFONT  (\itshape, \bfseries, \normalfont)
   {}            % INDENT (empty value is the same as 0pt, \parindent is the same as the standard indent for new paragraphs)
   {\bfseries}   % HEADFONT
   {.}           % HEADPUNCT
   {5pt plus 1pt minus 1pt}  % HEADSPACE (leave 5pt plus 1pt minus 1pt)
   {{\thmnumber{#1} \thmname{#2}{\thmnote{ (#3)}}}}          %CUSTOM-HEAD-SPEC
\begin{document}
\maketitle
\begin{abstract}
    The moduli spaces of flat $\mathrm{SL}_2$- and $\mathrm{PGL}_2$-connections are known to be singular SYZ-mirror partners. We establish the equality of Hodge numbers of their intersection (stringy) cohomology. In rank two, this answers a question raised by Tamás Hausel in Remark 3.30 of "Global topology of the Hitchin system".
\end{abstract}

\vspace{0.5 cm}

Let $C$ be a compact Riemann surface of genus $g$ with base point $c \in C$, and $G$ be either $\Sl_r$ or $\PGl_r$. We study the following moduli spaces (cf \cite{Simpson1994}):
\begin{itemize}
    \item the \textbf{de Rham} moduli space of principle flat $G$-bundles on $C \setminus c$ with holonomy $e^{2\pi i d/r}$ around $c$; 
    \item the \textbf{Dolbeault} moduli space of semistable $G$-Higgs bundles of degree $d$, i.e.\ semistable pairs $(E, \phi)$ consisting of a principal $G$-bundle $E$ of degree $d$ and a section $\phi \in H^0(C, \operatorname{ad}(E)\otimes K_C)$, where $K_C$ is the canonical bundle;
    \item the \textbf{Betti} moduli space parametrising $G$-representations of the fundamental group of $C \setminus c$ with monodromy $e^{2\pi i d/r}$ around $c$.
\end{itemize}
These moduli spaces are denoted respectively $\MDR^d(C, G)$, $\MDol^d(C, G)$ and $\MB^d(C, G)$. For convenience, we simply write $M(C, G)$ when we refer indifferently to  $\MDol^0(C,G)$, $\MDR^0(C, G)$ or $\MB^0(C,G)$.

In \cite{HauselThaddeus03}, Hausel and Thaddeus showed that the de Rham moduli spaces $\MDR^d(C, \Sl_r)$ and $\MDR^d(C, \PGl_r)$ are mirror partners in the sense of Strominger--Yau--Zaslow mirror symmetry. According to the general mirror symmetric framework, it is reasonable to expect a symmetry between their Hodge numbers. 

Hausel and Thaddeus conjectured the equality of the stringy E-polynomials
\begin{equation}\label{eq:topologicalmirrorsymmetry}
     E_{\mathrm{st}}^{B^e}(\MDR^d(C, \Sl_r))=E_{\mathrm{st}}^{\hat{B}^d}(\MDR^e(C, \PGl_r)),
\end{equation}
for $(d,r)=(e,r)=1$, and they prove it for $r=2,3$. The conjecture is now a theorem due to \cite{GWZ20} or \cite{MaulikShen2020I}.

In \cite[Remark 3.30]{Hausel13} Hausel asked  what cohomology theory we should compute on $\MDR^d(C, \Sl_r)$, with $(d,r)\neq 1$, to accomplish the agreement \eqref{eq:topologicalmirrorsymmetry}. 

We propose to use intersection cohomology. As first piece of evidence, we show the topological mirror symmetry conjecture in rank two, and degree zero, i.e.\ when we turn off the B-fields\footnote{See \cite[\S 4]{HauselThaddeus03} for a definition of gerbe or B-field.} $B$ and $\hat{B}$. 

%Denote by $M(C, G)$ indifferently the Dolbeault, de Rham or Betti moduli spaces $\MDol^0(C,G)$, $\MDR^0(C, G)$ or $\MB^0(C,G)$. %with $G=\Gl_r$, $\Sl_r$ or $\PGl_r$; see \cite{Simpson1994}.
\begin{thm}[Topological mirror symmetry in rank two and degree zero]\label{mainthm} The intersection E-polynomial of $M(C, \Sl_2)$ equals the stringy intersection E-polynomial of $M(C, \PGl_2)$
\[IE(M(C, \Sl_2)) = IE_{\mathrm{st}}(M(C, \PGl_2)).\]
\end{thm}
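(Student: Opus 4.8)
The plan is to use the standard presentation $M(C,\PGl_2) = M(C,\Sl_2)/\Gamma$, where $\Gamma = \Pic^0(C)[2]\cong\mu_2^{2g}$ acts by tensorisation; this action preserves the trivial determinant precisely because $\Gamma$ is $2$-torsion. Since $M(C,\Sl_2)$ carries symplectic singularities and $\Gamma$ acts symplectically and abelianly, I would first establish the inertia decomposition
\[
IE_{\mathrm{st}}(M(C,\PGl_2)) = \sum_{\gamma\in\Gamma} IE\bigl(M(C,\Sl_2)^{\gamma}/\Gamma\bigr)\,(uv)^{F(\gamma)},
\]
where $M(C,\Sl_2)^{\gamma}$ is the fixed locus and $F(\gamma) = \tfrac12\codim M(C,\Sl_2)^{\gamma}$ is the fermionic shift, an integer because a symplectic involution has even-codimensional fixed locus. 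This is the McKay principle for the quotient combined with $IH^*([X/\Gamma]) = IH^*(X)^{\Gamma}$, and it reduces \Cref{mainthm} to the identity
\[
IE(M(C,\Sl_2)) = \sum_{\gamma\in\Gamma} IE\bigl(M(C,\Sl_2)^{\gamma}/\Gamma\bigr)\,(uv)^{F(\gamma)},
\]
which I will call $(\star)$.

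The second step is to make the twisted sectors explicit. The term $\gamma=\mathrm{id}$ contributes the $\Gamma$-invariant part of $IE(M(C,\Sl_2))$. A nontrivial $\gamma$ corresponds to a connected étale double cover $\pi_\gamma\colon C_\gamma\to C$ of genus $2g-1$, and a polystable Higgs bundle is $(\otimes\gamma)$-fixed exactly when it is a direct image $\pi_{\gamma*}(\mathcal L,\theta)$ of a line bundle with a Higgs field on $C_\gamma$; tracking the $\Sl_2$ determinant and trace conditions identifies $M(C,\Sl_2)^{\gamma}$ with a finite quotient of the cotangent bundle of the $(g{-}1)$-dimensional Prym variety $\operatorname{Prym}(C_\gamma/C)$, on which the deck involution and the residual group $\Gamma/\langle\gamma\rangle$ act. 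Here $\codim M(C,\Sl_2)^{\gamma} = (6g-6)-(2g-2) = 4g-4$, so $F(\gamma)=2g-2$ for all $\gamma\neq\mathrm{id}$; and since $M(C,\Sl_2)^{\gamma}$ has only quotient singularities its intersection cohomology is ordinary cohomology, computable from that of an abelian variety together with the explicit finite-group actions. Summing over the $2^{2g}-1$ nontrivial classes yields the entire endoscopic part of the right-hand side of $(\star)$.

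The heart of the argument is the left-hand side: to compute $IH^*(M(C,\Sl_2))$ as a mixed Hodge structure and as a $\Gamma$-module. I would study the Hitchin map $h\colon M(C,\Sl_2)\to H^0(C,K_C^2)$, on which the relative Prym variety of the spectral curves acts and contains $\Gamma$ as global $2$-torsion. Applied to $Rh_*\,IC_{M(C,\Sl_2)}$, the decomposition theorem refines along the characters of $\Gamma$: the trivial character yields the $\Gamma$-invariant part, matching the $\gamma=\mathrm{id}$ term of $(\star)$, while each nontrivial character is supported on the endoscopic locus in the base — the image of the Hitchin base of the Prym--Higgs system $T^*\operatorname{Prym}(C_\gamma/C)$ — and contributes the (shifted) intersection cohomology of that system. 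In rank two this can be made fully explicit using the mild, well-understood singularities of $M(C,\Sl_2)$ along the strictly polystable locus $\TJac/(\Z/2)$, whose deepest stratum is the $2^{2g}$ most singular points indexed by $\operatorname{Jac}(C)[2]$: one either invokes a support theorem adapted to the singular total space, or passes to the Kirwan-type resolution $f\colon\widetilde M\to M(C,\Sl_2)$ obtained by blowing up first those points and then the proper transform of $\TJac/(\Z/2)$, computes $H^*(\widetilde M)$ by the methods available in the smooth case, and subtracts the decomposition-theorem summands supported on the two strata. Either route also delivers the Hodge-theoretic and $\Gamma$-equivariant refinements needed to compare both sides of $(\star)$ term by term.

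The main obstacle is exactly this control of the decomposition of $Rh_*\,IC_{M(C,\Sl_2)}$ over the non-generic locus of the Hitchin base, where the spectral curves become reducible or non-reduced: one must prove that no supports arise beyond the endoscopic loci, and that the shifts occurring there equal the fermionic shifts $F(\gamma)=2g-2$. Concretely this is a local study of the transverse singularities of $M(C,\Sl_2)$ and of the fibres of $f$ over $\TJac/(\Z/2)$ and over its deepest stratum — equivalently, showing that the defect $IE(\widetilde M)-IE(M(C,\Sl_2))$ is exactly the sum of the Prym--Higgs twisted sectors, each carrying the Tate twist $(uv)^{2g-2}$. Once $(\star)$ is reduced to such an identity of explicit polynomials — all expressible through the known mixed Hodge numbers of the smooth space $\MDol^1(C,\Sl_2)$ and of Prym varieties — its verification is a finite, if lengthy, computation.
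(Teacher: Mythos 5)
Your proposal follows essentially the same route as the paper: reduce to the twisted-sector identity $(\star)$, identify the fixed locus of each nontrivial $\gamma$ with a $\ZZ/2\ZZ$-quotient of $T^*\mathrm{Prym}(C_\gamma/C)$ with Fermionic shift $F(\gamma)=2g-2$, and compute the non-invariant part of $IH^*(M(C,\Sl_2))$ via the explicit desingularization obtained by blowing up the $2^{2g}$ deepest singular points and then the strict transform of $\TJac/(\ZZ/2\ZZ)$ --- which is precisely the computation the paper imports from \cite{Mauri20} (Corollary 1.11 and Remark 4.4 there, the latter giving that $IH^*_c$ is a sum of trivial and regular $\Gamma$-representations). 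The only step you leave implicit that the paper spells out is the transfer from the Dolbeault to the de Rham and Betti incarnations of the statement, done via the argument of \cite{HauselThaddeus03} together with \cite{FelisettiMauri2020}.
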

The refinements of the Hausel--Thaddeus conjecture postulated in \cite[Conjecture 3.27]{Hausel13} and \cite[Conjecture 5.9]{Hausel13} also hold true in rank two and degree zero, as long as we consider their intersection cohomology analogues; see \cref{thm:topologicalmirrorsymmetryrank2} and \cref{thm:perversetopologicalmirrorsymmetryrank2}.

\section{Intersection stringy E-polynomial}\label{sec:intersectionstringyE-pol}
The intersection cohomology of a complex variety $X$ with compact support, middle perversity and rational coefficients is denoted by $IH^*_c(X)$. Recall that $IH^*_c(X)$ carries a canonical mixed Hodge structure, and so we can define the intersection E-polynomial of $X$ as
\begin{equation}\label{defn:IEpol}
    IE(X) \coloneqq \sum_{r,s,d}(-1)^d \dim ( \Gr^W_{r+s} IH^d_{c}(X, \CC))^{r,s} u^rv^s.
\end{equation}
Suppose that $X$ is endowed with the action of a finite abelian group $\Gamma$, and denote the group of characters of $\Gamma$ by $\hat{\Gamma}$. The intersection cohomology of $X$ decomposes under the action of $\Gamma$ into isotypic components:
\[IH^*_c(X)= \bigoplus_{\kappa \in \hat{\Gamma}} IH^*(X)_{\kappa}.\]
Then, if we pose
\begin{align*}
    IE(X)_{\kappa} & \coloneqq \sum_{r,s,d}(-1)^d \dim ( \Gr^W_{r+s} IH^d_{c}(X, \CC)_{\kappa})^{r,s} u^rv^s,
\end{align*}
we obtain $IE(X)= \sum_{\kappa \in \hat{\Gamma}} IE(X)_{\kappa}$. 

Define also the intersection stringy E-polynomial by
\[IE_{\mathrm{st}}(X) \coloneqq \sum_{\gamma \in \Gamma} IE(X_{\gamma}/\Gamma; u,v)(uv)^{F(\gamma)},\]
where 
\begin{itemize}
    \item $X_{\gamma}$ is the fixed-point set of $\gamma \in \Gamma$.
    \item $F(\gamma)$ is the Fermionic shift, defined as $F(\gamma)=\sum_j w_j$, where $\gamma$ acts on the normal bundle of $X_{\gamma}$ in $X$ with eigenvalues $e^{2\pi i w_j}$ with $w_j \in (0,1)$.
\end{itemize}

\section{Topological mirror symmetry}
Let $\Gamma \coloneqq \mathrm{Pic}^0(C)[r]\simeq (\ZZ/r\ZZ)^{2g}$ be the group of $r$-torsion line bundles over the compact Riemann surface $C$ of genus $g$, endowed with the canonical flat connection. The group $\Gamma$ acts by tensorisation on $\MDol^d(C, \Sl_r)$ and $\MDR^d(C, \Sl_r)$. Via the non-abelian Hodge correspondence, the action corresponds to the algebraic action of the characters
$\Gamma \simeq \Hom(\pi_1(C), \pm 1)$ which acts on $\MB^d(C, \Sl_r)$ by multiplication. The quotient of $M^d(C, \Sl_r)$ by the action of $\Gamma$ is isomorphic to $M^d(C, \PGl_r)$.

We identify $w\colon \Gamma \to \hat{\Gamma}$ through Poincar\'{e} duality (also known as Weil pairing)
\[\Gamma \times {\Gamma} \simeq H_1(C, \ZZ/r\ZZ)\times H_1(C, \ZZ/r\ZZ) \to \ZZ/r\ZZ.\] 
\begin{conj}[Topological mirror symmetry in degree zero]\label{conj:topmirr} For $\kappa \in \hat{\Gamma}$ we have
\begin{equation}\label{eq:refinedtopologicalmirrorsymmetry}
    IE(M(C, \Sl_r))_{\kappa} =IE(M(C, \Sl_r)_{\gamma}/\Gamma; u,v)(uv)^{F(\gamma)}
\end{equation}
where $\gamma = w(\kappa)$. In particular, we obtain
\[IE(M(C, \Sl_r))=IE_{\mathrm{st}}(M(C, \PGl_r)).\]
\end{conj}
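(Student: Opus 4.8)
We prove the $r=2$ case of \cref{conj:topmirr}; since $w\colon\Gamma\to\hat\Gamma$ is a bijection and $IH^*_c(M(C,\Sl_2))=\bigoplus_{\kappa\in\hat\Gamma}IH^*(M(C,\Sl_2))_\kappa$, summing \eqref{eq:refinedtopologicalmirrorsymmetry} over $\kappa$ yields
\[IE(M(C,\Sl_2))=\sum_{\gamma\in\Gamma}IE\big(M(C,\Sl_2)_\gamma/\Gamma;u,v\big)(uv)^{F(\gamma)}=IE_{\mathrm{st}}(M(C,\PGl_2)),\]
which is \cref{mainthm}. The plan is therefore to verify \eqref{eq:refinedtopologicalmirrorsymmetry} one character $\kappa$ at a time. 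For $\kappa=0$ this is immediate: $F(0)=0$, $M(C,\Sl_2)_0=M(C,\Sl_2)$, and $IH^*(M(C,\Sl_2))_0=IH^*_c(M(C,\Sl_2))^{\Gamma}=IH^*_c(M(C,\Sl_2)/\Gamma)=IH^*_c(M(C,\PGl_2))$, the first equality being the compatibility of intersection cohomology with finite quotients. So from now on fix $\kappa\neq 0$, let $\gamma=w(\kappa)\in\Gamma$ be the corresponding nonzero $2$-torsion line bundle on $C$, and let $p\colon C_\gamma\to C$ be the associated connected \'etale double cover, with deck involution $\sigma$.

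First I would describe the fixed locus. A semistable $\Sl_2$-Higgs bundle is fixed by $-\otimes\gamma$ precisely when it is the $\sigma$-equivariant pushforward along $p$ of a rank-one Higgs bundle $(N,\theta)$ on $C_\gamma$ with $\mathrm{Nm}(N)=\gamma$ (up to a fixed normalisation) and $\sigma^{*}\theta=-\theta$, i.e.\ with $\theta\in H^0(C_\gamma,K_{C_\gamma})^{-}\cong H^0(C,K_C\otimes\gamma)$. Hence $M(C,\Sl_2)_\gamma$ is a (mildly singular) subvariety of dimension $2g-2$, fibred by Prym varieties of the spectral curves over the \emph{endoscopic locus} $\mathcal{A}_\gamma\subseteq\mathcal{A}=H^0(C,K_C^2)$, the image of the squaring map $H^0(C,K_C\otimes\gamma)\to H^0(C,K_C^2)$, which has codimension $(3g-3)-(g-1)=2g-2$. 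Tensorisation makes $\Gamma$ act on $N$ through $p^{*}\colon\Gamma\to\Pic^0(C_\gamma)[2]$, whose kernel is $\langle\gamma\rangle$, so the residual action on $M(C,\Sl_2)_\gamma$ factors through $\Gamma/\langle\gamma\rangle$. Since $\gamma$ has order two it acts on the normal bundle of $M(C,\Sl_2)_\gamma$ in $M(C,\Sl_2)$ with every eigenvalue equal to $-1$, whence
\[F(\gamma)=\tfrac12\,\codim\big(M(C,\Sl_2)_\gamma,\,M(C,\Sl_2)\big)=\tfrac12(4g-4)=2g-2=\codim(\mathcal{A}_\gamma,\mathcal{A}).\]
The coincidence of $F(\gamma)$ with $\codim(\mathcal{A}_\gamma,\mathcal{A})$ is the numerical shadow of what follows.

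The heart of the argument is to prove that $IH^{*}(M(C,\Sl_2))_\kappa$ is isomorphic, after a shift by $4g-4$ in cohomological degree and a Tate twist, to $IH^{*}_c(M(C,\Sl_2)_\gamma/\Gamma)$; on $E$-polynomials this reads $IE(M(C,\Sl_2))_\kappa=IE(M(C,\Sl_2)_\gamma/\Gamma;u,v)(uv)^{2g-2}=IE(M(C,\Sl_2)_\gamma/\Gamma;u,v)(uv)^{F(\gamma)}$, which is \eqref{eq:refinedtopologicalmirrorsymmetry}. I would deduce this from the decomposition theorem applied to the (singular) Hitchin map $h\colon M(C,\Sl_2)\to\mathcal{A}$, together with an endoscopic support theorem in the spirit of Ng\^o: after splitting $h_{*}IC_{M(C,\Sl_2)}$ under the $\Gamma$-action, the $\kappa$-isotypic summand should be supported on $\mathcal{A}_\gamma$, and there it should be identified, via the spectral correspondence on $C_\gamma$, with a shift (by $4g-4=2F(\gamma)$ in degree) of $h_{\gamma,*}IC_{M(C,\Sl_2)_\gamma}$ for the Prym--Higgs system $h_\gamma\colon M(C,\Sl_2)_\gamma\to\mathcal{A}_\gamma$. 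Passing to $\Gamma$-invariants — equivalently, to the intersection cohomology of the quotients — and reading off $E$-polynomials finishes the case $\kappa\neq0$, hence \cref{mainthm}; running the same bookkeeping at the level of perverse filtrations yields the refinement \cref{thm:perversetopologicalmirrorsymmetryrank2}.

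The main obstacle is precisely this support-and-matching statement in the \emph{singular} (degree-zero) setting. In the smooth degree-one case of Hausel--Thaddeus one applies the decomposition theorem to a proper map of smooth varieties; here $M(C,\Sl_2)$ and the fixed loci $M(C,\Sl_2)_\gamma$ are singular, so one must work with $IC$-complexes throughout, control the extra summands that the decomposition theorem produces over the deepest strata of $\mathcal{A}$ — above all over $0\in\mathcal{A}$, whose fibre is the singular nilpotent cone — and check that the spectral correspondence extends $\Gamma$-equivariantly across the reducible and non-reduced spectral curves. I expect to handle this by passing to a $\Gamma$-equivariant resolution (or partial desingularisation) $\widetilde{M}\to M(C,\Sl_2)$ — say of Kirwan type, or obtained from a Hecke/norm correspondence with the smooth twisted moduli spaces where the endoscopic identities are already available — computing $H^{*}(\widetilde{M})$ with its $\Gamma$-decomposition, isolating the $IC$-summand of $M(C,\Sl_2)$, running the analogous analysis on the $\PGl_2$-side, and concluding by a term-by-term comparison of $E$-polynomials: the $\kappa\neq0$ contributions on the $\Sl_2$-side match the $\gamma$-twisted sectors of the $\PGl_2$-orbifold shifted by $(uv)^{2g-2}$, while the $\kappa=0$ contribution matches the untwisted sector.
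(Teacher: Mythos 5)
Your reduction to the nontrivial characters, your description of the fixed locus $M(C,\Sl_2)_\gamma$ via pushforward along the \'etale double cover $\pi_\gamma\colon C_\gamma\to C$, and your computation of the Fermionic shift $F(\gamma)=\tfrac12\codim M(C,\Sl_2)_\gamma=2g-2$ all agree with the paper. The gap sits exactly where you place "the heart of the argument": the identity $IE(M(C,\Sl_2))_\kappa=IE(M(C,\Sl_2)_\gamma/\Gamma)(uv)^{2g-2}$ \emph{is} the theorem, and you only \emph{expect} to obtain it from an endoscopic support theorem for the singular, degree-zero Hitchin map. No such statement is available in this setting: the Ng\^o-type support-and-matching technology you invoke (Maulik--Shen, Groechenig--Wyss--Ziegler) is developed for the smooth twisted moduli spaces with $(d,r)=1$, and the paper's own remark explicitly notes that its argument instead rests on the closed-form computation of $\sum_{\kappa\neq 1}IE(M(C,\Sl_2))_\kappa$ in \cite{Mauri20}, which depends on an explicit desingularization of $M(C,\Sl_2)$ constructed there. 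Concretely, the paper uses two inputs your sketch does not supply: (i) the $\Gamma$-module $IH^*_c(M(C,\Sl_2))$ is a direct sum of trivial and regular representations, so $IE(M(C,\Sl_2))_\kappa$ is independent of $\kappa\neq 1$; and (ii) the explicit value $\tfrac12(uv)^{3g-3}\big((u+1)^{g-1}(v+1)^{g-1}+(u-1)^{g-1}(v-1)^{g-1}\big)$ of that component, which is then matched by direct computation against $IE\big(T^*\mathrm{Prym}/(\ZZ/2\ZZ)\big)(uv)^{2g-2}$. Your proposed substitute --- equivariant resolution, isolating the $IC$-summand, extending the spectral correspondence over the nilpotent cone --- is a research program whose feasibility is precisely what is at stake, not a proof.

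Two further points. The statement concerns $M(C,\Sl_2)$ in all three incarnations, but your argument lives entirely on the Dolbeault side; since the Betti space carries a different mixed Hodge structure, the identity $IE(\MB(C,\Sl_2))_\kappa=IE\big((\CC^*)^{2g-2}/(\ZZ/2\ZZ)\big)(uv)^{2g-2}$ needs its own verification (the paper computes both sides separately and passes from Dolbeault to de Rham via the Hausel--Thaddeus argument together with \cite{FelisettiMauri2020}). Your treatment of the trivial character via $IH^*_c(M/\Gamma)=IH^*_c(M)^{\Gamma}$ is correct and matches the paper.
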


\begin{thm}[\cref{mainthm}]\label{thm:topologicalmirrorsymmetryrank2}
\cref{conj:topmirr} holds for $r=2$.
\end{thm}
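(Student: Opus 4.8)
The plan is to prove the refined identity \eqref{eq:refinedtopologicalmirrorsymmetry} for each $\kappa\in\hat\Gamma$ separately; summing over $\kappa$ and using $IE(M(C,\Sl_2))=\sum_\kappa IE(M(C,\Sl_2))_\kappa$ together with the definition of $IE_{\mathrm{st}}$ (recall $M(C,\PGl_2)=M(C,\Sl_2)/\Gamma$) then gives the stringy equality. Fix $\kappa$ and set $\gamma=w(\kappa)\in\Gamma=\mathrm{Pic}^0(C)[2]$. If $\kappa=0$ then $\gamma=0$, the fixed locus is all of $M(C,\Sl_2)$, the Fermionic shift vanishes, and the claim reduces to $IE(M(C,\Sl_2))_0=IE(M(C,\Sl_2)/\Gamma)$, which is formal: for the action of a finite group the invariant part of intersection cohomology computes the intersection cohomology of the quotient, compatibly with the mixed Hodge structures. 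So the content is the case $\kappa\neq0$, i.e.\ $\gamma=L$ a nontrivial $2$-torsion line bundle, with associated connected \'etale double cover $p\colon\tilde C\to C$ of genus $2g-1$ and Galois involution $\sigma$.

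For the left-hand side I pass to the Dolbeault model via non-abelian Hodge theory and use the $\Sl_2$ Hitchin map $h\colon\MDol^0(C,\Sl_2)\to B=H^0(C,K_C^2)$, which is $\Gamma$-equivariant for the trivial action on $B$. Since $\Gamma$ acts on the (compactified) Prym fibres of $h$ by translation by $2$-torsion, hence trivially on their cohomology, the part of $IH^*(M(C,\Sl_2))$ lying over the locus of smooth fibres is entirely $\Gamma$-invariant, and the $\kappa$-isotypic summand of $Rh_*$ of the intersection complex --- equivalently, the pushforward along the $\PGl_2$ Hitchin map of the rank-one local system $\mathcal L_\kappa$ classifying the cover $M(C,\Sl_2)\to M(C,\PGl_2)$ --- is supported on the endoscopic locus $B_L\subset B$, the image of the quadratic map $t\mapsto -t^2$ from the $(g-1)$-dimensional space $H^0(\tilde C,K_{\tilde C})^{-}$ of Galois-anti-invariant differentials; this is precisely the locus of spectral data whose pullback to $\tilde C$ splits compatibly with $\sigma$. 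Over $B_L$ the restricted fibration is the Prym fibration of the rank-one Higgs moduli on $\tilde C$ subject to a norm/Prym constraint, and I would identify the $\kappa$-part of $IH^*(M(C,\Sl_2))$, up to a Tate twist and shift, with the intersection cohomology of that moduli space. By the Narasimhan--Ramanan description of fixed points of tensorisation, it is exactly the fixed locus $M(C,\Sl_2)_\gamma$: a union of torsors over $T^*\mathrm{Prym}(\tilde C/C)$, of dimension $2g-2$, carrying the residual action of $\sigma$.

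It remains to identify the right-hand side of \eqref{eq:refinedtopologicalmirrorsymmetry}. The Fermionic shift is immediate: $\gamma$ has order two and fixes $M(C,\Sl_2)_\gamma$ pointwise, so it acts on the normal bundle with all eigenvalues $-1$, giving $w_j=\tfrac{1}{2}$ for each of the $\codim=(6g-6)-(2g-2)=4g-4$ normal directions, hence $F(\gamma)=2g-2$; the factor $(uv)^{2g-2}$ is exactly the effect on $IE$ of the Tate twist and shift found above. For the quotient $M(C,\Sl_2)_\gamma/\Gamma$ I would unwind the residual action: $\langle\gamma\rangle$ acts trivially, while $\Gamma/\langle\gamma\rangle$ maps via $p^*$ onto $\ker\!\big(\mathrm{Nm}\colon\mathrm{Pic}^0(\tilde C)\to\mathrm{Pic}^0(C)\big)[2]$ and acts by translations, permuting the connected components of $M(C,\Sl_2)_\gamma$ and translating within them; combining this with the action of $\sigma$ identifies $M(C,\Sl_2)_\gamma/\Gamma$ with the quotient appearing in $IE_{\mathrm{st}}(M(C,\PGl_2))$, and reading off $IE$ from the Prym description then finishes the matching, which in rank two is small enough to carry out explicitly.

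I expect the main obstacle to be the singularity of the total space $\MDol^0(C,\Sl_2)$: the endoscopic decomposition above rests on a support theorem, and Ng\^o's support theorem is proved for the smooth, degree-coprime Hitchin system. One therefore either passes to a resolution of $\MDol^0(C,\Sl_2)$ and controls the extra summands produced by the decomposition theorem, or establishes the required support statement directly for the intersection complex of the singular moduli space, the delicate strata being the nilpotent cone $h^{-1}(0)$, the locus of strictly semistable Higgs bundles $T^*\mathrm{Jac}(C)/(\ZZ/2)$, and the $2^{2g}$ deepest points $L\oplus L$ with $L\in\Gamma$, where fibre dimensions and automorphism groups jump. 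On the $\PGl_2$ side the corresponding subtlety is to check that forming $IH^*$ commutes on the nose with the relevant quotients; for $r=2$ this should follow from the transverse local model $\CC^{2n}/(\pm1)$ of the singularities together with the low-rank combinatorics, but keeping precise track of torsors, connected components of the fixed loci, and the Galois involution throughout is where most of the care is needed.
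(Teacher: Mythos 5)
Your setup agrees with the paper's: the reduction to $\kappa\neq 1$ via $IH^*_c(X/\Gamma)=IH^*_c(X)^\Gamma$, the identification of the fixed locus $M(C,\Sl_2)_\gamma$ with a $\ZZ/2\ZZ$-quotient of (a torsor over) $T^*\mathrm{Prym}(C_\gamma/C)$ of dimension $2g-2$, and the computation $F(\gamma)=\tfrac12\codim=2g-2$ are all exactly what the paper does. Where you diverge is on the central quantity, $IE(M(C,\Sl_2))_\kappa$ for $\kappa\neq 1$. The paper does not prove a support theorem: it quotes the explicit computation of the variant isotypic part of $IH^*_c$ from \cite[Corollary 1.11, Remark 4.4]{Mauri20} (which rests on an explicit desingularization of $M(C,\Sl_2)$, and also gives that $IH^*_c$ is a sum of trivial and regular $\Gamma$-representations, so all $\kappa\neq1$ components coincide), and then simply matches the resulting closed formulas against $IE(M(C,\Sl_2)_\gamma/\Gamma)(uv)^{2g-2}$. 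Your route — locating the $\kappa$-isotypic summand of $R\chi_*IC$ on the endoscopic locus $B_L$ and reading it off from the Prym fibration — is the Ng\^o/GWZ/Maulik--Shen strategy for the coprime (smooth) case, and you yourself identify the obstruction: the required support statement is not known for the intersection complex of the singular degree-zero moduli space, where the decomposition theorem can produce extra summands along the strictly semistable locus and the nilpotent cone. Flagging the obstacle is not the same as overcoming it; since this step \emph{is} the theorem, the proposal has a genuine gap at its core. (The paper's own closing remark makes precisely this point: the argument hinges on the explicit desingularization, and no support-theoretic substitute is currently available in this singular setting.)

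A second, independent gap: the statement concerns $M(C,\Sl_2)$ in all three incarnations, and the Betti case cannot be reached by "passing to the Dolbeault model", because the non-abelian Hodge homeomorphism does not preserve mixed Hodge structures — indeed the paper records visibly different formulas for $IE(\MDol(C,\Sl_2))_\kappa$ and $IE(\MB(C,\Sl_2))_\kappa$ in \eqref{eq:intersectioncohomokappa}. The Betti side needs its own weight computation (in the paper, again imported from \cite{Mauri20}), and the de Rham side is then deduced from the Dolbeault one via the deformation argument of \cite[\S 6]{HauselThaddeus03} together with \cite[Theorem 3.2]{FelisettiMauri2020}; your proposal addresses neither.
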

\begin{proof}
Without loss of generality we can suppose $\gamma\neq 0$, or equivalently $\kappa \neq 1$. Indeed, \[IE(M(C, \Sl_r))_{1} =IE(M(C, \Sl_r)/\Gamma; u,v);\]
see for instance the proof of \cite[Proposition 3]{GottscheSoergel93}. 

For any $\gamma \in \Gamma \setminus \{0\}$, we have an associated 2-torsion line bundle $L_{\gamma}$. Consider the \'{e}tale double cover $\pi_\gamma\colon C_{\gamma} \to C$ consisting of the square root of a non-zero section of  $L_{\gamma}^{\otimes 2}\simeq \mathcal{O}_C$ in the total space of $L_{\gamma}$, and let $\iota$ be its deck transformation. 

For any $L \in M(C_{\gamma}, \Gl_1)$, the rank-two vector bundle $L \oplus \iota^* L$ is a $\iota$-invariant object in $M(C_{\gamma}, \Gl_2)$, which descends to an object $L_{\iota} \in M(C, \Gl_2)$. Hence, the pushforward morphism
\begin{align*}
    \pi_{\gamma, *}\colon M(C_{\gamma}, \Gl_1) \to M(C, \Gl_2), \quad L \mapsto L_{\iota},
\end{align*}
descends to a $\Gamma$-invariant embedding
\[j\colon M(C_{\gamma}, \Gl_1)/\ZZ/2\ZZ \hookrightarrow M(C, \Gl_2).\]
The determinant map $\det\pi_{\gamma, *}$ can be identified with the norm map
\begin{align*}
    \mathrm{Nm}_{C_{\gamma}/C}\colon M(C_{\gamma}, \Gl_1)/\ZZ/2\ZZ \to M(C, \Gl_1), \quad L \mapsto L\otimes \iota^*L,
\end{align*}
Therefore, the fixed-point set $M(C, \Sl_2)_{\gamma}$ admits the following geometric characterization: 
\[M(C, \Sl_2)_{\gamma} = \mathrm{Im}j \cap M(C, \Sl_2) \simeq \ker \mathrm{Nm}_{C_{\gamma}/C}^{\circ},\]
where the last term is the connected component of $\mathrm{Nm}_{C_{\gamma}/C}^{-1}(\mathcal{O}_C)$ containing $\mathcal{O}_{C_{\gamma}}$. 

On the Dolbeault side, $\MDol(C, \Sl_2)_{\gamma}$ is isomorphic to the quotient by $\ZZ/2\ZZ$ of the cotangent bundle of an abelian variety of dimension $g-1$, as $\MDol(C_{\gamma}, \Gl_1)$ is isomorphic to $T^*\Pic^0(C_{\gamma})$; see also the proof of \cref{thm:perversetopologicalmirrorsymmetryrank2}. On the Betti side, we have 
\[\MB(C_{\gamma}, \Gl_1) \simeq (\CC^*)^{4g-2}, \text{ and so } \MB(C, \Sl_2)_{\gamma} \simeq (\CC^*)^{2g-2}/{\ZZ/2\ZZ}.\]

The involution defining the $\ZZ/2\ZZ$-quotient is the inverse of the group law.

Since the $\Gamma$-module $IH^*_c(M(C, \Sl_2))$ is a direct sums of copies of the trivial and regular representations by \cite[Remark 4.4]{Mauri20}, we have
\[IE(M(C, \Sl_2))_{\kappa} = IE(M(C, \Sl_2))_{\kappa'}\]
for any $\kappa, \kappa' \in \hat{\Gamma}\setminus \{1\}$. Thanks to \cite[Corollary 1.11, Equations (23) and (25)]{Mauri20} we have
\begin{align}
    IE(\MDol(C, \Sl_2))_{\kappa}  = & \frac{1}{2} (uv)^{3g-3} ((u+1)^{g-1}(v+1)^{g-1} +(u-1)^{g-1}(v-1)^{g-1}) \nonumber\\
     = & IE(\MDol(C, \Sl_2)_{\gamma}/\Gamma)(uv)^{2g-2},\nonumber\\
    IE(\MB(C, \Sl_2))_{\kappa}  = & \frac{1}{2}  (uv)^{2g-2} ((uv+1)^{2g-2}+(uv-1)^{2g-2}) \label{eq:intersectioncohomokappa}\\
    = & IE((\CC^*)^{2g-2}/{\ZZ/2\ZZ})(uv)^{2g-2} \nonumber\\
    = & IE(\MB(C, \Sl_2)_{\gamma}/\Gamma)(uv)^{2g-2}. \nonumber
\end{align}
Note that the Fermionic shift $F(\gamma)$ equals half of the codimension of $\MDol(C, \Sl_2)_{\gamma}$ in $\MDol(C, \Sl_2)$, since $\gamma$ is an involution. Hence, for $\gamma \neq 0$ we have indeed
\[F(\gamma)= \frac{1}{2}\codim \MDol(C, \Sl_2)_{\gamma} = 2g-2.\]

Finally, the same argument of \cite[\S 6]{HauselThaddeus03}, together with \cite[Theorem 3.2]{FelisettiMauri2020}, implies that Conjecture \eqref{eq:refinedtopologicalmirrorsymmetry} for the Dolbeault moduli spaces yields \eqref{eq:refinedtopologicalmirrorsymmetry} for the de Rham moduli spaces.
\end{proof}
\begin{rmk}[Failure of topological mirror symmetry for ordinary cohomology] In general the equality \eqref{eq:refinedtopologicalmirrorsymmetry} fails for ordinary cohomology. For instance, for $\kappa \neq 1$, $\gamma=w(\kappa)$ and $q=uv$ we have
\begin{align*}
    E(M_B(C, \Sl_2))_{\kappa} & =\frac{1}{2}q^{2g-2}((q+1)^{2g-2}+(q-1)^{2g-2}-2)\\ 
    & \neq \frac{1}{2}q^{2g-2}((q+1)^{2g-2}+(q-1)^{2g-2})= E(M_B(C, \Sl_2)_{\gamma}/\Gamma)q^{F(\gamma)},
\end{align*}
where the first equality follows from \cite[Theorem 2]{MartinezMunoz16} or \cite[Theorem 1.3]{BaragliaHekmati17}, together with \cite[Remark 4.3]{Mauri20}, while the last equality comes from \eqref{eq:intersectioncohomokappa}, since $M_B(C, \Sl_2)_{\gamma}/\Gamma$ has only quotient singularities. This shows that there is a non-negligible contribution of the singularity of $M(C, \Sl_r)$ to the agreement \eqref{eq:refinedtopologicalmirrorsymmetry} of Hodge numbers.
\end{rmk}
\begin{rmk}
The proof of \cref{thm:topologicalmirrorsymmetryrank2} relies on the computation of $\sum_{\kappa \neq 1} IE(\newline M(C, \Sl_2))_{\kappa} $ in \cite[Corollary 1.11]{Mauri20}, and ultimately on the explicit construction of a desingularization of $M(C, \Sl_2)$ in \cite[\S 3]{Mauri20}. To the best of the author's knowledge, this is not available in higher rank, and so it is unclear if the arguments above extend in higher rank. However, remarkable progress in this direction have been made in \cite{MaulikShen2020I} and \cite{MaulikShen2020II}.
\end{rmk}
\color{red}
%Check equivalence with other stringy polynomial, and compare with the intuition of having a crepant resolution.
%Try to understand how to define the B-field in the singular case. Check that $X_{\gamma}$ meet the smooth locus of $X$. Read Narashiman-Ramanan e Maulik-Shen e GWZ. Forse rango 3 sarebbe interessante da calcolarsi. Mettere un B-field ma rischio di non avere esempi su cui testarlo. Prova che se hanno lo stesso gcd allora hanno la stessa stratificazione delle singolarita'.
\color{black}

\section{Perverse topological mirror symmetry}
The intersection cohomology of $\MDol(C, \Sl_r)$ and $\MDol(C, \Sl_2)_{\gamma}$ are filtered by the weight filtration $W$ of Deligne's mixed Hodge structure, and by the perverse filtration $P$ associated to the Hitchin fibrations
\begin{align*}
    \chi  \colon \MDol(C, \Sl_r) & \to \Lambda \coloneqq \bigoplus^n_{i=2} H^0(C, K^{\otimes i}_C) \\
    \chi_{\gamma}\coloneqq \chi|_{\MDol(C, \Sl_r)_{\gamma}} \colon \MDol(C, \Sl_r)_{\gamma} & \to \Lambda_{\gamma}\coloneqq \mathrm{Im}(\chi|_{\MDol(C, \Sl_r)_{\gamma}}) \subseteq \Lambda,
\end{align*}
which assigns to the Higgs bundle $(E, \phi)$ the characteristic polynomial of $\phi$; see \cite[\S 2.2]{Mauri20} for a brief account on the perverse filtration.

Define the perverse intersection E-polynomial
\[PIE(\MDol(C, \Sl_r);u,v,q)\coloneqq \sum_{r,s,d}(-1)^d \dim ( \Gr^W_{r+s} \Gr^P_{k}IH^d_{c}(X, \CC))^{r,s} u^rv^sq^k\]
and the stringy perverse intersection E-polynomial
\[PIE_{\mathrm{st}}(\MDol(C, \PGl_r); u,v,q)\coloneqq \sum_{\gamma \in \Gamma} PIE(\MDol(C, \Sl_r)_{\gamma}/\Gamma; u,v)(uvq)^{F(\gamma)}.\]
By Definition \eqref{defn:IEpol} and the last paragraph of the proof of \cref{thm:topologicalmirrorsymmetryrank2}, we have 
\[PIE(\MDol(C, \Sl_r);u,v,1) = IE(\MDol(C, \Sl_r);u,v)=IE(\MDR(C, \Sl_r);u,v).\]
Further, Relative Hard Lefschetz \cite[Theorem 2.1.1]{deCataldoMigliorini05} implies
\[PIE(\MDol(C, \Sl_r);u,v,q)=(uvq)^{\dim}PIE\bigg(\MDol(C, \Sl_r);u,v,\frac{1}{uvq}\bigg),\]
where $\dim = 2(r^2-1)(g-1)$.

We conjecture the exchange of the perverse Hodge numbers.
\begin{conj}[Perverse topological mirror symmetry in degree zero]\label{conj:pervtopmirr}
\[PIE(\MDol(C, \Sl_r);u,v,q)=(uvq)^{\dim}PIE_{\mathrm{str}}\bigg(\MDol(C, \PGl_r);u, v, \frac{1}{uvq}\bigg).\]
\end{conj}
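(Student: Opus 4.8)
The plan is to upgrade the proof of \cref{thm:topologicalmirrorsymmetryrank2} by keeping track of the perverse filtration at every step. As before, we may isotypically decompose $IH^*_c(\MDol(C, \Sl_2))$ under $\Gamma$ and treat the trivial summand $\kappa = 1$ and the nontrivial summands $\kappa \neq 1$ separately; note that the perverse filtration is $\Gamma$-equivariant, since $\Gamma$ acts by automorphisms of the Hitchin fibration $\chi$, so each isotypic piece carries its own perverse filtration and $PIE$ is additive over the decomposition. For $\kappa = 1$ one has $PIE(\MDol(C,\Sl_2))_1 = PIE(\MDol(C,\Sl_2)/\Gamma; u,v,q)$, which matches the $\gamma = 0$ term $PIE(\MDol(C, \PGl_2); u,v,q)$ on the stringy side after the substitution $q \mapsto (uvq)^{-1}$ combined with the Relative Hard Lefschetz symmetry recorded in the excerpt; so the content is again entirely in the nontrivial part.

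For $\kappa \neq 1$, I would invoke the perverse refinement of \cite[Corollary 1.11]{Mauri20}: the desingularization of $\MDol(C, \Sl_2)$ constructed in \cite[\S 3]{Mauri20} is compatible with the Hitchin map, so the computation of $\sum_{\kappa \neq 1} IE(\MDol(C,\Sl_2))_\kappa$ there can be promoted to a computation of $\sum_{\kappa \neq 1} PIE(\MDol(C,\Sl_2))_\kappa$; concretely I expect
\[
PIE(\MDol(C, \Sl_2))_{\kappa} = \tfrac{1}{2}(uvq)^{3g-3}\bigl((u q^{?}+1)^{g-1}(v q^{?}+1)^{g-1} + (u q^{?}-1)^{g-1}(v q^{?}-1)^{g-1}\bigr),
\]
with the perverse degrees dictated by the fact that the relevant abelian-variety factor $\ker \mathrm{Nm}^\circ_{C_\gamma/C}$ lives in the Hitchin base over the Prym-type locus. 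On the mirror side, $\MDol(C,\Sl_2)_\gamma$ is (the $\ZZ/2$-quotient of) $T^*\Pic^0(C_\gamma)$, and $\chi_\gamma$ is the restriction of the Hitchin map, which is identified with the projection $T^*\Pic^0(C_\gamma)^- \to \Lambda_\gamma$; its perverse filtration is computed by the classical fact (used already in \cite{HauselThaddeus03}) that for an abelian scheme the perverse Leray filtration is the one induced by the weight/degree grading on $H^*$ of the abelian fibers. This gives $PIE(\MDol(C,\Sl_2)_\gamma/\Gamma; u,v,q)$ explicitly, and one checks it equals the displayed polynomial after multiplying by $(uvq)^{F(\gamma)} = (uvq)^{2g-2}$ and applying the functional equation $q \mapsto (uvq)^{-1}$ together with the global $(uvq)^{\dim}$ prefactor, using $\dim = 6g-6$ and $3g-3 + 2g-2 + \tfrac{1}{2}(2g-2) = 6g-6 - (g-1)$ to line up the extremal perverse degrees.

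Finally, the de Rham statement follows from the Dolbeault one by the argument of \cite[\S 6]{HauselThaddeus03} together with \cite[Theorem 3.2]{FelisettiMauri2020}, exactly as in the proof of \cref{thm:topologicalmirrorsymmetryrank2}; here one must be slightly careful that this comparison respects the perverse filtration, but since the perverse filtration on the de Rham side is \emph{defined} via the Dolbeault side this is automatic. I expect the main obstacle to be the perverse refinement of \cite[Corollary 1.11]{Mauri20}: one must verify that the explicit desingularization of $M(C,\Sl_2)$, the decomposition theorem applied to it, and the resulting formula for $\sum_{\kappa \neq 1} IE_\kappa$ can all be tracked through the Hitchin fibration with the correct perverse weights — in particular pinning down the perverse degree of each cohomology class of the Prym variety and of the exceptional contributions of the desingularization. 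Everything else is the bookkeeping of matching two explicit polynomials under the Relative Hard Lefschetz functional equation.
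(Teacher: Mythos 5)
Your overall reduction (isotypic decomposition under $\Gamma$, using Relative Hard Lefschetz to strip off the $(uvq)^{\dim}$ prefactor and the $q \mapsto (uvq)^{-1}$ substitution, and the explicit description of the fixed loci and of $\chi_\gamma$ via the Prym variety) follows the same lines as the paper. But the step you yourself flag as ``the main obstacle'' --- the perverse refinement of \cite[Corollary 1.11]{Mauri20}, i.e.\ the determination of $PIE(\MDol(C,\Sl_2))_\kappa$ for $\kappa \neq 1$ --- is left as an expectation with undetermined exponents ($q^{?}$), and this is precisely the mathematical content of the theorem. As written, the proposal establishes nothing beyond the non-perverse statement already contained in \cref{thm:topologicalmirrorsymmetryrank2}.

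The paper closes this gap by a different and much shorter route: it invokes \cite[Theorem 5.5]{Mauri20}, which asserts that the perverse filtration on the variant part $IH^d(\MDol(C,\Sl_2))_\kappa$, $\kappa \neq 1$, is \emph{concentrated} in the single perverse degree $d-2g+2$; on the mirror side, $\chi_\gamma$ is (a $\ZZ/2\ZZ$-quotient of) the trivial fibration $\mathrm{Prym}\times\CC^{g-1}\to\CC^{g-1}$, so the perverse filtration on $IH^d(\MDol(C,\Sl_2)_\gamma/\Gamma)$ is concentrated in degree $d$. These two concentration statements mean that on both sides $PIE$ is obtained from $IE$ by an explicit monomial substitution, so the perverse statement becomes literally equivalent to \cref{thm:topologicalmirrorsymmetryrank2}, with no new polynomial computation and no need to re-run the desingularization argument with perverse weights. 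If you want to salvage your approach, replace the conjectural ``perverse refinement of Corollary 1.11'' by this concentration result. Two further points: your consistency check $3g-3+2g-2+\tfrac{1}{2}(2g-2)=6g-6-(g-1)$ is false (the left-hand side equals $6g-6$, the right-hand side $5g-5$), which suggests the bookkeeping has not actually been carried out; and there is no de Rham statement to deduce here, since \cref{conj:pervtopmirr} concerns only the Dolbeault moduli spaces, the perverse filtration being attached to the Hitchin fibration.
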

For $q=1$, \cref{conj:pervtopmirr} specialises to  
\[IE(\MDol(C, \Sl_r)) = IE_{\mathrm{st}}(\MDol(C, \PGl_r)).\]
Further, the PI=WI conjecture \cite[Question 4.1.7]{deCataldoMaulik2018} would imply 
\[PIE(\MDol(C, \Sl_r);1,1,q) = IE(\MB(C, \Sl_r); q),\]
and together with \cref{conj:pervtopmirr} it would give
\[IE(\MB(C, \Sl_r)) = IE_{\mathrm{st}}(\MB(C, \PGl_r)).\]

\begin{thm}\label{thm:perversetopologicalmirrorsymmetryrank2}
\cref{conj:pervtopmirr} holds for $r=2$.
\end{thm}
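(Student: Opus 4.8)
The plan is to refine the argument of \cref{thm:topologicalmirrorsymmetryrank2} by keeping track of the perverse filtration throughout. As before, by the projection formula / Künneth-type decomposition used in \cite[Proposition 3]{GottscheSoergel93}, the $\kappa = 1$ isotypic piece of $PIE(\MDol(C,\Sl_2))$ equals $PIE(\MDol(C,\Sl_2)/\Gamma)$, which is the $\gamma = 0$ term on the stringy side (here $F(0)=0$), so it suffices to treat $\kappa \neq 1$, equivalently $\gamma = w(\kappa) \neq 0$. For such $\gamma$, one uses the geometric description of the fixed locus: $\MDol(C,\Sl_2)_\gamma \simeq T^*\Pic^0(C_\gamma)^{\mathrm{Nm}=0}/(\ZZ/2\ZZ)$, an abelian variety of dimension $g-1$ with its cotangent bundle, modulo the inversion involution. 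The Hitchin map $\chi_\gamma$ on this locus is the projection to $\Lambda_\gamma$; one must identify $\Lambda_\gamma$ with $H^0(C_\gamma, K_{C_\gamma})^{-}$ (the anti-invariant part, of dimension $g-1$) and check that $\chi_\gamma$ is, up to the $\ZZ/2\ZZ$-quotient, the projection of a cotangent bundle of an abelian variety to its base — a "trivial" abelian Hitchin system. For such a system the perverse filtration is computed by the fibre cohomology (the filtration is by the "weight" of the $H^*$ of the fibre $\Pic^0$-torsor), so $PIE(\MDol(C,\Sl_2)_\gamma/\Gamma; u,v,q)$ can be written down explicitly, and it has the product shape $\tfrac12\big((uv+q)^{g-1}(\cdots) + (uvq-1)^{g-1}(\cdots)\big)$-type expression matching the $q$-refinement of \eqref{eq:intersectioncohomokappa}.

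Next I would compute the left-hand side $\sum_{\kappa \neq 1} PIE(\MDol(C,\Sl_2))_\kappa$. The input is \cite[Corollary 1.11]{Mauri20} (or its perverse refinement), which computes the $\kappa \neq 1$ part of the intersection cohomology of $\MDol(C,\Sl_2)$ together with its perverse filtration via the explicit desingularization of \cite[\S 3]{Mauri20}; as in the non-perverse case the $\Gamma$-module structure on $IH^*_c$ is a sum of trivial and regular representations by \cite[Remark 4.4]{Mauri20}, so all $\kappa \neq 1$ pieces coincide and equal $\tfrac{1}{|\Gamma|-1}$ times the non-trivial part of the total. One then has to verify that the resulting polynomial in $u,v,q$ matches $PIE(\MDol(C,\Sl_2)_\gamma/\Gamma; u,v)\,(uvq)^{F(\gamma)}$ with $F(\gamma) = 2g-2$, i.e. the half-codimension computed at the end of the proof of \cref{thm:topologicalmirrorsymmetryrank2}; the factor $(uvq)^{2g-2}$ is the perverse-graded version of the Fermionic shift, since the normal directions to the fixed locus contribute in a single perverse degree equal to $2g-2$ (they are cotangent/Hitchin-base directions). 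The consistency of this with Relative Hard Lefschetz and the self-duality stated before the theorem is a useful sanity check: both sides satisfy the functional equation $q \mapsto 1/(uvq)$ up to $(uvq)^{\dim}$, so it is enough to match, say, the lowest-perverse-degree parts, which are governed by the ($W$-graded) cohomology of the Hitchin bases and reduce to \eqref{eq:intersectioncohomokappa}.

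Finally, the Dolbeault statement is the whole content here, since \cref{conj:pervtopmirr} is phrased only for $\MDol$; unlike \cref{thm:topologicalmirrorsymmetryrank2} there is no de Rham/Betti comparison step to add (the perverse filtration is a Dolbeault-side structure). The main obstacle I expect is the perverse refinement of \cite[Corollary 1.11]{Mauri20}: one needs not just the Hodge numbers of $IH^*_c(M(C,\Sl_2))_{\kappa\neq 1}$ but their perverse degrees, which requires analyzing how the perverse (Hitchin) filtration interacts with the desingularization $\tm$ of \cite[\S 3]{Mauri20} and with the decomposition theorem applied to $\tm \to M(C,\Sl_2)$; concretely, one must check that the correction terms (supported on the exceptional/singular locus) sit in the expected perverse degrees so that the $\kappa \neq 1$ summand is exactly a shift by $(uvq)^{2g-2}$ of the abelian-variety answer. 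Granting that perverse bookkeeping, the identity falls out by the same comparison of explicit polynomials as in \cref{thm:topologicalmirrorsymmetryrank2}.
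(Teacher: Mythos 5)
Your outline tracks the paper's proof closely -- reduce to $\kappa\neq 1$, describe the fixed locus as $T^*\mathrm{Prym}/(\ZZ/2\ZZ)$ with its trivial abelian Hitchin system, and then compare perverse refinements of the two sides -- but the step you yourself flag as "the main obstacle" is exactly the point where your argument is not yet a proof. You propose to establish a full perverse refinement of \cite[Corollary 1.11]{Mauri20} by re-running the decomposition-theorem analysis on the desingularization of \cite[\S 3]{Mauri20} and then matching explicit polynomials in $u,v,q$. The paper does not do this: it invokes \cite[Theorem 5.5]{Mauri20}, which says that the perverse filtration on $IH^d(\MDol(C,\Sl_2))_{\kappa}$ for $\kappa\neq 1$ is \emph{concentrated in the single degree} $d-2g+2$. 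Combined with the observation that $\chi_{\gamma}$ is the $(\ZZ/2\ZZ)$-quotient of the product projection $\mathrm{Prym}\times\CC^{g-1}\to\CC^{g-1}$, so that the perverse filtration on $IH^d(\MDol(C,\Sl_2)_{\gamma}/\Gamma)$ is concentrated in degree $d$, both sides of the perverse identity become pure $q$-shifts of their $q=1$ specializations, and the statement reduces formally to \cref{thm:topologicalmirrorsymmetryrank2} -- no new polynomial computation, and no explicit $q$-refined formula for the fixed locus, is needed. (The paper also uses Relative Hard Lefschetz at the outset to remove the $q\mapsto 1/(uvq)$ inversion from the conjecture's statement, rather than only as a sanity check.)

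So the gap is concrete: without the concentration statement for the $\kappa\neq 1$ isotypic part, your "granting that perverse bookkeeping" clause is doing all the work, and the bookkeeping you propose (tracking perverse degrees of correction terms through the resolution) is both harder and unnecessary given that the concentration result is already available in \cite{Mauri20}. If you want to complete your version of the argument, replace the proposed recomputation by a citation of (or a proof of) that concentration result; everything else in your sketch, including the absence of a de Rham/Betti comparison step, is consistent with the paper.
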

\begin{proof}
By Relative Hard Lefschetz, it is enough to show 
\begin{equation*}\label{eq:PIEPIESTR}
   PIE(\MDol(C, \Sl_2))_{\kappa} = PIE(\MDol(C, \Sl_2)_{\gamma}/\Gamma; u,v)(uvq)^{F(\gamma)}
\end{equation*}
for any $\kappa \in \hat{\Gamma}$ and $\gamma = w(\kappa)$.
As in \cref{thm:topologicalmirrorsymmetryrank2}, the case $\kappa=1$, alias $\gamma=0$, is trivial. Suppose then $\kappa \neq 1$ and $\gamma \neq 0$. The perverse filtration on $IH^d(\MDol(C, \Sl_2))_{\kappa}$ is concentrated in degree $d-2g+2$ by \cite[Theorem 5.5]{Mauri20}. Moreover, the Hitchin map $\chi_{\gamma}$ is a quotient by the inverse of the group law of the projection
\[\MDol(C_{\gamma}, \Gl_1)\supset T^*\mathrm{Prym} = \mathrm{Prym} \times \CC^{g-1} \to \CC^{g-1},\]
where $\mathrm{Prym}$ is the connected component of the identity of the kernel of the norm map $\mathrm{Nm}\colon \Pic^0(C_{\gamma}) \to \Pic^0(C)$, given by $\mathrm{Nm}(L)=L \otimes \iota^*L$. Hence, the perverse filtration on $IH^d(\MDol(C, \Sl_2)_{\gamma}/\Gamma)$, with $\gamma \neq 0$, is concentrated in degree $d$; cf proof of \cite[Theorem 6.6]{FelisettiMauri2020}.
Then one easily see that \cref{conj:pervtopmirr} for $r=2$ is equivalent to \cref{thm:topologicalmirrorsymmetryrank2}.
\end{proof}
\textbf{Acknowledgement.}
This work have been supported by the Max Planck Institute for Mathematics.
\bibliographystyle{plain}
\bibliography{construction}
\end{document}